\newtheorem{theorem}{Theorem}[section]
\newtheorem{lemma}[theorem]{Lemma}
\newtheorem{proposition}[theorem]{Proposition}
\newtheorem{definition}[theorem]{Definition}
\newtheorem{remark}[theorem]{Remark}
\newcommand{\RR}{\mathbb{R}}
\newcommand{\NN}{\mathbb{N}}
\newcommand{\ZZ}{\mathbb{Z}}
\newcommand{\PP}{\mathbb{P}}
\newcommand{\EE}{\mathbb{E}}
\newcommand{\sN}{\mathcal{N}}
\newcommand{\sS}{\mathcal{S}}
\newcommand{\one}{{\bf 1}}
\newcommand{\sss}{\mathbf{s}}
\newcommand{\su}{\mathbf{u}}
\newcommand{\Tr}{\mathsf{Tr}}
\definecolor{AfonsoBlue}{RGB}{30,65,123}
\definecolor{weird}{RGB}{200, 55, 130}
\title{The Spectral Norm of Random Lifts of Matrices
}
\date{}
\author[1]{Afonso S.\ Bandeira\thanks{Email: \textit{bandeira@math.ethz.ch}. Part of this work was done while ASB was with the Department of Mathematics at the Courant Institute of Mathematical Sciences, and the Center for Data Science, at New York University; and partially supported by NSF grants DMS-1712730 and DMS-1719545, and by a grant from the Sloan Foundation.}}
\author[2]{Yunzi Ding\thanks{Email: \textit{yding@nyu.edu}. Partially supported by NSF grant DMS-1712730.}}
\affil[1]{Department of Mathematics, ETH Zurich, Switzerland}
\affil[2]{Department of Mathematics, Courant Institute of Mathematical Sciences, New York University, USA}
\begin{document}

\maketitle
\thispagestyle{empty}

\begin{abstract}
    We study the spectral norm of random lifts of matrices. Given an $n\times n$ symmetric matrix $A$, and a centered distribution $\pi$ on $k\times k\ (k\ge 2)$ symmetric matrices with spectral norm at most $1$, let the \textit{matrix random lift} $A^{(k,\pi)}$ be the random symmetric $kn\times kn$ matrix $(A_{ij}X_{ij})_{1\le i < j \le n}$, where $X_{ij}$ are independent samples from $\pi$. We prove that
    \[\EE \|A^{(k,\pi)}\|\lesssim \max_{i}\sqrt{\sum_j A_{ij}^2}+\max_{ij}|A_{ij}|\sqrt{\log (kn)}.\]
   This result can be viewed as an extension of existing spectral bounds on random matrices with independent entries, providing further instances where the multiplicative $\sqrt{\log n}$ factor in the Non-Commutative Khintchine inequality can be removed. 
   
   As a direct application of our result, we % show an application on random $k$-lifts of graphs. We
   prove an upper bound of $2(1+\epsilon)\sqrt{\Delta}+O(\sqrt{\log(kn)})$ on the new eigenvalues for random $k$-lifts of a fixed $G = (V,E)$ with $|V| = n$ and maximum degree $\Delta$, compared to the previous result of $O(\sqrt{\Delta\log(kn)})$ by Oliveira~\cite{oliveira-k-lifts} and the recent breakthrough by Bordenave and Collins~\cite{bc-eigenvalues} which gives $2\sqrt{\Delta-1} + o(1)$ as $k\rightarrow\infty$ for $\Delta$-regular graph $G$.
\end{abstract}

\newpage

\section{Introduction}
\subsection{The Non-Commutative Khintchine inequality}
The Non-Commutative Khintchine (NCK) inequality, originally introduced by Lust-Piquard and Pisier~\cite{pisier-book}, is one of the simplest tools for understanding the spectrum of matrix series, namely  
\begin{equation}\label{matrix-series}
    X = \sum_{i = 1}^N \gamma_i A_i
\end{equation}
where $A_i\ (i = 1,2,\dots,N)$ are $n\times n$ real symmetric matrices and $\gamma_i\ (i = 1,2,\dots,N)$ are i.i.d.\ random variables, usually assumed gaussian or Rademacher. The inequality is stated as follows.
\begin{theorem}[Non-Commutative Khintchine (NCK) inequality] 
Let $A_1,A_2,\dots, A_N$ be $n\times n$ symmetric matrices and $\gamma_1,\gamma_2,\dots,\gamma_N$ be i.i.d.\ $\sN(0,1)$ random variables, then
\begin{equation}\label{nck}
    \EE \left\|\sum_{i = 1}^N\gamma_i A_i\right\|\le \sigma\sqrt{2+2\log(2n)},
    \quad \text{where}\  \sigma := \left(\left\|\sum_{i = 1}^N A_i^2\right\|\right)^{1/2}.
\end{equation}
\end{theorem}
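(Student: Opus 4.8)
The plan is to sidestep the combinatorics of moments and run the matrix Laplace transform (matrix Chernoff) method, whose only heavy ingredient is Lieb's concavity theorem. Write $X=\sum_{i=1}^N\gamma_iA_i$ and assume $\sigma>0$, since otherwise every $A_i$ vanishes and the bound is trivial. As $X$ is symmetric with eigenvalues $\lambda_1,\dots,\lambda_n$, for any $\theta>0$ we have $e^{\theta\|X\|}=\max_j e^{\theta|\lambda_j|}\le\sum_j\bigl(e^{\theta\lambda_j}+e^{-\theta\lambda_j}\bigr)=\Tr e^{\theta X}+\Tr e^{-\theta X}$, so Jensen's inequality gives
\[
\theta\,\EE\|X\|\;\le\;\log\EE\bigl(\Tr e^{\theta X}+\Tr e^{-\theta X}\bigr).
\]

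The crux is the matrix moment-generating-function estimate $\EE\,\Tr e^{\theta X}\le\Tr\exp\bigl(\tfrac{\theta^2}{2}\sum_iA_i^2\bigr)$, and the same for $-X$, which has the same law. For a fixed symmetric $A$ and a standard Gaussian $\gamma$, diagonalizing $A$ reduces $\EE e^{\theta\gamma A}=e^{\theta^2A^2/2}$ to the scalar identity $\EE e^{t\gamma}=e^{t^2/2}$. One then peels off the Gaussians $\gamma_1,\dots,\gamma_N$ one at a time: conditioning on the remaining ones and invoking that $Y\mapsto\Tr\exp(H+\log Y)$ is concave on positive-definite matrices $Y$ (Lieb), Jensen's inequality lets one replace the factor $e^{\theta\gamma_iA_i}$ by $\EE e^{\theta\gamma_iA_i}=e^{\theta^2A_i^2/2}$ inside the trace-exponential; after $N$ steps the expectation has been absorbed. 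Finally $\Tr\exp\bigl(\tfrac{\theta^2}{2}\sum_iA_i^2\bigr)\le n\,\lambda_{\max}\!\bigl(\exp(\tfrac{\theta^2}{2}\sum_iA_i^2)\bigr)=n\,e^{\theta^2\sigma^2/2}$ because $\|\sum_iA_i^2\|=\sigma^2$.

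Plugging the bound into the first display, $\theta\,\EE\|X\|\le\log\bigl(2n\,e^{\theta^2\sigma^2/2}\bigr)=\log(2n)+\tfrac{\theta^2\sigma^2}{2}$, so
\[
\EE\|X\|\;\le\;\frac{\log(2n)}{\theta}+\frac{\theta\sigma^2}{2}\qquad\text{for every }\theta>0 .
\]
Choosing $\theta=\sqrt{2\log(2n)}/\sigma$ gives $\EE\|X\|\le\sigma\sqrt{2\log(2n)}\le\sigma\sqrt{2+2\log(2n)}$, which is the claimed inequality.

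The step I expect to be the genuine obstacle is the matrix MGF bound $\EE\,\Tr e^{\theta X}\le\Tr\exp(\tfrac{\theta^2}{2}\sum_iA_i^2)$: all of its force comes from Lieb's concavity theorem (equivalently, a joint-convexity / Golden--Thompson type statement), which is the one nontrivial external input, while everything surrounding it is elementary. If one insists on a self-contained argument one can instead use the moment method: bound $\EE\|X\|\le(\EE\,\Tr X^{2p})^{1/2p}$ for an integer $p$ of order $\log(2n)$, expand $\EE\,\Tr X^{2p}$ over pair partitions of $\{1,\dots,2p\}$ by Wick's formula, and show every pair partition contributes at most $\Tr\bigl((\sum_iA_i^2)^p\bigr)\le n\sigma^{2p}$. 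There the delicate part is the contribution of crossing partitions, which can be handled by an AM--GM estimate on products of eigenvalues of the relevant powers of $X$ together with the elementary fact that $\sum_iA_i\,u\,u^{\mathsf{T}}A_i$, summed over an orthonormal basis of vectors $u$, telescopes to $\sum_iA_i^2$; optimizing $p$ then recovers the $\sqrt{\log n}$ factor.
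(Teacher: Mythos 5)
Your proof is correct, and it is worth noting that the paper itself does not prove this theorem: NCK is stated as background, attributed to Lust-Piquard and Pisier, with no proof supplied. So there is no in-paper argument to compare against. What you give is the standard matrix Laplace-transform derivation (Tropp's ``master bound''), whose one nontrivial ingredient is, as you say, Lieb's concavity of $Y\mapsto\Tr\exp(H+\log Y)$; conditioning on $\gamma_1,\dots,\gamma_{i-1}$ and peeling off one Gaussian at a time via Jensen is exactly the subadditivity-of-cumulants step, and your pointwise estimate $e^{\theta\|X\|}\le\Tr e^{\theta X}+\Tr e^{-\theta X}$ together with the symmetry of $X\mapsto -X$ handles the norm rather than just $\lambda_{\max}$. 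Your optimization over $\theta$ in fact yields $\EE\|X\|\le\sigma\sqrt{2\log(2n)}$, which is slightly sharper than the stated $\sigma\sqrt{2+2\log(2n)}$ and of course implies it. It is also worth contrasting your method with what the paper actually does for its own Theorem~2.3: there the authors deliberately avoid the Laplace-transform route (which is responsible for the multiplicative $\sqrt{\log n}$ in NCK) and instead use a moment comparison to an auxiliary Wigner matrix, precisely so as to turn the $\sqrt{\log}$ factor from multiplicative into additive. Your sketched alternative via $\EE\Tr X^{2p}$ and Wick pairings is closer in spirit to that combinatorial route, though handling crossing pair partitions cleanly is indeed the delicate part.
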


\noindent
The NCK inequality and other phenomena of \textit{matrix concentration} have been proven under various settings and extensively studied in~\cite{oliveira-concentration,tropp-tail-bounds,tropp-intro}. One particularly important application of matrix concentration is on the spectra of random matrices with independent entries. These random matrices can be represented as matrix series upon a direct entry-wise decomposition, as we show below.

\subsection{Random matrices with independent entries}
The study of random matrices with independent entries traces back to the seminal work by Wigner~\cite{wigner}. For Wigner matrices (real symmetric or Hermitian random matrices with independent mean-zero and unit variance entries), a long line of work has established a comprehensive understanding towards its spectral properties over the past decades (see, for example,~\cite{furedi-eigenvalues,bai-yin-wigner,anderson-book,tao-book}). One of the most important results is the Wigner semicircle law: for $n\times n$ Wigner matrix $X$, $\EE\|X\|/\sqrt{n}\rightarrow 2$ and the spectrum of $X$ converges to the semicircle $\frac{1}{2\pi}\sqrt{4-x^2}\one_{\{-2\le x\le 2\}}$ as $n\rightarrow\infty$. 

Random matrices with different variances on each of the independent entries, for instance real symmetric $X\in \RR^{n\times n}$ with $X_{ij}\sim \sN(0,b_{ij}^2)$ for $1\le i\le j\le n$, have also been studied~\cite{davidson-random-matrix,rudelson-non-asymptotic,vershynin-non-asymptotic}. With the NCK inequality, the following estimate can be obtained:
\begin{equation}\label{matrix-nck-bound}
    \EE \|X\|\lesssim \sigma\sqrt{\log n}, \quad \text{where}\ \sigma:= \max_{i}\sqrt{\sum_j b_{ij}^2}.
\end{equation}
Here $A\lesssim B$ (respectively, $A\gtrsim B$) refer to $A\le CB$ (respectively, $A\ge CB$) for some absolute positive constant $C$. The definition of $\sigma$ is consistent with~\eqref{nck} upon writing $X$ as the following matrix series:
\begin{equation}
    \label{matrix-series}
    X = \sum_{i = 1}^n \gamma_{ii}b_{ii}E_{ii} + 
\sum_{1\le i < j\le n}\gamma_{ij} b_{ij}(E_{ij} + E_{ji}).
\end{equation}
Here $E_{ij} := e_i e_j^\top$. One may immediately notice that the bound~\eqref{matrix-nck-bound} is not sharp for Wigner matrices with i.i.d.\ standard gaussian entries, since it gives $\EE \|X\|\lesssim \sqrt{n\log n}$ rather than $\EE \|X\|\sim \sqrt{n}$. In fact, a recent improvement for matrices with independent entries~\cite{bandeira-sharp-bound} yields
\begin{equation}\label{bvh-sharp-bound}
    \EE \|X\|\lesssim \sigma + \sigma_*\sqrt{\log n}, \quad \text{where}\ \sigma_* := \max_{ij}|b_{ij}|.
\end{equation}
This upper bound is sharp as a matching lower bound $\EE \|X\|\gtrsim \sigma + \sigma_*\sqrt{\log n}$ is also given in~\cite{bandeira-sharp-bound} under mild assumptions on the $b_{ij}$'s. %Following the bound ~\eqref{bvh-sharp-bound}, it was further conjectured in~\cite{bandeira-sharp-bound,van-handel-conj} that
%\begin{equation}
    %\label{vH-conj}
    %\EE\|X\| \asymp \|\EE %X^2\|^{1/2} + %\EE\max_{ij}|b_{ij}|
%\end{equation}
%in which direction much progress has also been made in
Further refinements, that hold for general $b_{ij}$'s, have been recently obtained~\cite{van-handel-conj,latala-bound-conj, benaych-georges-radii}.

\subsection{Improving the NCK inequality}
The gap between the NCK bound~\eqref{matrix-nck-bound} and its improvement~\eqref{bvh-sharp-bound} demonstrates the sub-optimality of the NCK inequality in some settings. In fact, many improvements to bounds obtained via the NCK inequality are known under various settings. Seginer~\cite{seginer-bound} improved the NCK bound for random matrices with independent uniformly bounded entries. Exploiting the non-commutativity among the $A_i$'s, Tropp~\cite{tropp-nck-improve} proved the following upper bound for the series~\eqref{matrix-series}, which improved the multiplicative factor on $\sigma$ from $\sqrt{\log n}$ to $\sqrt[4]{\log n}$: 
\[\EE \|X\| \lesssim \sigma\sqrt[4]{\log n} + \omega \sqrt{\log n}\]
where the alignment parameter $\omega$ is defined as
\[
\omega := \max_{Q_1,Q_2,Q_3\in U_n}\left\| \sum_{i,j = 1}^n  A_iQ_1A_jQ_2A_iQ_3A_j\right\|^{1/4},
\]
 here $U_n$ denotes the group of $n\times n$ unitary matrices (the paper~\cite{tropp-nck-improve} considers Hermitian $A_i$'s, while in this paper we focus on the real symmetric setting). In fact, a bound which replaces the multiplicative $\sqrt{\log n}$ factor in the NCK inequality by an additive factor has been hypothesized in many different forms~\cite{tropp-tail-bounds,10l42op,bandeira-sharp-bound,van-handel-conj,latala-bound-conj,tropp-nck-improve}. For the matrix series~\eqref{matrix-series}, define the ``weak variance" as
 \begin{equation}\label{def-sigma-star}
    \sigma_* = \left(\max_{\|v\| = 1} \sum_{i = 1}^N (v^\top A_i v)^2\right)^{1/2},
\end{equation}
a possible improvement to Theorem~\ref{nck} could be
\begin{equation}\label{nck-improve-ineq}
    \EE \left\|\sum_{i = 1}^N\gamma_i A_i\right\|\le C(\sigma+\sigma_*\sqrt{\log n}).
\end{equation}
Note that $\sigma_*\le\sigma$ by a simple application of the Cauchy-Schwarz inequality. For random matrices with independent $X_{ij}\sim \sN(0,b_{ij}^2)$ for $1\le i\le j\le n$, upon writing it as a matrix series as in~\eqref{matrix-series}, we have
\[X = \sum_{i = 1}^{N} \gamma_i A_i, \quad {\rm where}\ \  \{A_i\}_{i = 1}^N = \{b_{ii}E_{ii}\}_{i = 1}^n \cup \{b_{ij}(E_{ij} + E_{ji})\}_{1\le i < j\le n}.\]
It is not hard to show that
\[\max_{\|v\| = 1} \left(\sum_{i = 1}^N (v^\top A_i v)^2\right)^{1/2} \asymp \max_{ij}|b_{ij}|,\]
thus $\sigma_*$ defined in~\eqref{def-sigma-star} is consistent with the quantity defined in~\eqref{bvh-sharp-bound} in that they differ only by a multiplicative constant, and the proposed improvement~\eqref{nck-improve-ineq} is indeed true in the case of random matrices with independent entries due to~\cite{bandeira-sharp-bound}.\\

In this paper, we show another class of examples, in which we improve the bound given by the NCK inequality~\eqref{nck} by replacing the multiplicative $\log$ factor with an additive factor, as is the case in the conjectured bound~\eqref{nck-improve-ineq}. In the following context, all the matrices we consider are real. As an extension to random matrices with independent entries, we consider the operation of \textit{matrix lifts}, in which each entry of an underlying deterministic matrix is replaced by the product of itself and a random $k\times k$ matrix, as described in the following definition. 
\begin{definition}[Matrix lifts]\label{def-matrix-lift}
 Let $A$ be an $n\times n$ symmetric matrix with zero diagonal entries, and $\pi$ be a measure supported on $k\times k$ matrices. Define the $(k,\pi)$ lift of $A$, denoted $A^{  (k,\pi)}$, as follows: 
\begin{itemize}
    \item Draw i.i.d.\ samples $\{\Pi_{ij}\}_{1\le i < j\le n}$ from $\pi$; for $1\le i < j\le n$, denote $\Pi_{ji}:= \Pi_{ij}^\top$;
    \item For $1\le i\le n$, denote $\Pi_{ii} := 0_k$;
    \item For all $1\le i, j \le n$, replace $A_{ij}$ with the matrix $A_{ij}\Pi_{ij}$.
\end{itemize}
The resulting matrix is a $kn\times kn$ symmetric random matrix, which can be written as
\begin{equation}
\label{def-matrix-lift-good}
    A^{ (k,\pi)} = \sum_{1\le i,j\le n}A_{ij}(E_{ij}\otimes \Pi_{ij})
\end{equation}
where the symbol ``$\otimes$" on the RHS denotes the Kronecker product of matrices.
\end{definition}
\noindent
The main theorem of this paper is the following bound:
\begin{theorem}\label{thm-group}
Let $A$ be a symmetric $n\times n$ matrix ($n\ge 2$) with zero diagonal entries. Suppose $\pi$ is a centered measure supported on $k\times k$ matrices with spectral norm at most $1$. Then there exists a universal constant $C$, such that for any $\epsilon \in (0, 1/2]$, 
\begin{equation}\label{eqn-thm-group}
    \EE \|A^{  (k,\pi)}\| \le 2(1+\epsilon)\sigma + \frac{C}{\sqrt{\log(1+\epsilon)}}\sigma_* \sqrt{\log(kn)}.
\end{equation}
where
\begin{equation*}
    \sigma := \max_{i}\sqrt{\sum_j A_{ij}^2},\quad \sigma_* := \max_{ij}|A_{ij}|.
\end{equation*}
\end{theorem}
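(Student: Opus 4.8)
The plan is to run the moment method, adapting the combinatorial scheme of Bandeira--van Handel~\cite{bandeira-sharp-bound} to the Kronecker/block structure of matrix lifts. Fix a positive integer $p$ and start from $\EE\|A^{(k,\pi)}\|\le\bigl(\EE\Tr[(A^{(k,\pi)})^{2p}]\bigr)^{1/2p}$. Expanding the $2p$-th power using $(E_{ij}\otimes\Pi_{ij})(E_{jl}\otimes\Pi_{jl})=E_{il}\otimes(\Pi_{ij}\Pi_{jl})$ and $\Tr[E_{i_1i_1}\otimes M]=\Tr[M]$, one gets
\[\EE\Tr[(A^{(k,\pi)})^{2p}]=\sum_{i_1,\dots,i_{2p}}A_{i_1i_2}A_{i_2i_3}\cdots A_{i_{2p}i_1}\,\EE\Tr[\Pi_{i_1i_2}\Pi_{i_2i_3}\cdots\Pi_{i_{2p}i_1}],\]
the sum running over closed walks $i_1\to i_2\to\cdots\to i_{2p}\to i_1$ on $\{1,\dots,n\}$. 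Two reductions: since $\pi$ is centered, the $\{\Pi_{ij}\}_{i<j}$ are independent with $\Pi_{ji}=\Pi_{ij}^\top$, and the trace is multilinear in its factors, a walk contributes zero unless every edge it uses is traversed at least twice; and since every $\Pi$ has spectral norm $\le 1$, the product inside the trace has spectral norm $\le 1$, so $|\EE\Tr[\Pi_{i_1i_2}\cdots\Pi_{i_{2p}i_1}]|\le k$.

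Next I would group the surviving walks by \emph{shape} (isomorphism class under relabeling of the visited vertices, keeping the order of first appearances) and bound each shape's contribution. A shape visiting $m$ distinct vertices uses $e$ distinct edges with $m-1\le e\le p$ (connectivity, and each edge at least twice in a length-$2p$ walk), with multiplicities $m_f\ge 2$ summing to $2p$. Summing the weight $\prod_a|A_{i_ai_{a+1}}|=\prod_f|A_f|^{m_f}$ over all labelings, one uses $|A_f|^{m_f}\le\sigma_*^{m_f-2}|A_f|^2$ together with the standard tree estimate $\sum_{\text{labelings}}\prod_f A_f^2\le n\,\sigma^{2(m-1)}\sigma_*^{2(e-m+1)}$ (root a spanning tree, sum over leaves inward, bound the chord edges crudely by $\sigma_*^2$) to get, writing $t=m-1$,
\[\sum_{\text{labelings of a fixed shape}}\ \prod_{a=1}^{2p}|A_{i_ai_{a+1}}|\ \le\ n\,\sigma^{2t}\,\sigma_*^{2(p-t)}.\]

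Combining, $\EE\Tr[(A^{(k,\pi)})^{2p}]\le nk\sum_{t}M(p,t)\,\sigma^{2t}\sigma_*^{2(p-t)}$, where $M(p,t)$ counts shapes on $t+1$ vertices. The crucial combinatorial input is the Füredi--Komlós/Bandeira--van Handel bound $M(p,t)\le\binom{2p}{2(p-t)}\,4^{t}\,(Cp)^{p-t}$: a shape splits into a Dyck-type ``tree part'' occupying $2t$ of the steps (at most the Catalan number $C_t\le 4^t$ choices) and the remaining $\le 2(p-t)$ steps, of which only about $p-t$ are genuinely free, each with at most $Cp$ options. Granting this, the binomial theorem gives $\sum_t M(p,t)\sigma^{2t}\sigma_*^{2(p-t)}\le(2\sigma+\sqrt{Cp}\,\sigma_*)^{2p}$, hence $\EE\|A^{(k,\pi)}\|\le(nk)^{1/2p}(2\sigma+\sqrt{Cp}\,\sigma_*)$. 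Taking $p=\lceil\log(nk)/(2\log(1+\epsilon))\rceil$ then forces $(nk)^{1/2p}\le 1+\epsilon$ while $\sqrt{Cp}\le C'\sqrt{\log(kn)}/\sqrt{\log(1+\epsilon)}$, which yields the stated inequality after absorbing $1+\epsilon\le 3/2$ into the constant.

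The main obstacle is the sharp shape count $M(p,t)\lesssim\binom{2p}{2(p-t)}4^t(Cp)^{p-t}$: the delicate point is that the non-tree steps cost only $\sqrt{p}$ per unit of $p-t$ \emph{on average}, which is exactly what converts a spurious multiplicative $\sqrt{\log(kn)}$ into the additive term $\sigma_*\sqrt{\log(kn)}$. Everything specific to matrix lifts---the Kronecker block structure and the relation $\Pi_{ji}=\Pi_{ij}^\top$---enters only through ``mean zero'', ``spectral norm $\le 1$'', and multilinearity of the trace, so the decorating matrices $\Pi_{ij}$ are inert in the counting and the combinatorial argument of~\cite{bandeira-sharp-bound} can be imported at that level; the crude estimate $|\EE\Tr[\cdots]|\le k$ replacing their exact Gaussian moment identities is harmless because $k^{1/2p}\le 1+\epsilon$ for the chosen $p$.
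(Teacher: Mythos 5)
Your setup is essentially the same as the paper's: expand the $2p$-th trace moment, observe that centering kills any walk with a singly-traversed edge, use $\|\Pi_{ij}\|\le 1$ to bound $|\EE\Tr[\Pi_{i_1i_2}\cdots\Pi_{i_{2p}i_1}]|\le k$, and control the labeling sum over each shape by a tree/chord estimate to get $n\,\sigma^{2t}\sigma_*^{2(p-t)}$ with $t=m-1$. These steps are all correct, and the final optimization over $p=\lceil\log(nk)/(2\log(1+\epsilon))\rceil$ matches the paper. The difference, and the gap, is in how you close the combinatorics. You reduce everything to the single claim $M(p,t)\lesssim\binom{2p}{2(p-t)}4^t(Cp)^{p-t}$ on the number of shapes, and you say explicitly that this is the delicate point and leave it unproven. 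That claim is \emph{not} a routine F\"uredi--Koml\'os count: the original F\"uredi--Koml\'os encoding gives a much worse power of $p$ per unit of excess, and a per-shape bound of the stated sharpness (Catalan $\times$ roughly $\binom{2p}{2(p-t)}(Cp)^{p-t}$) is considerably stronger than anything you actually need and would itself be a nontrivial combinatorial theorem. As written, the heart of the proof is missing.

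The paper deliberately avoids this issue. After the identical reduction to bounding $\sum_{\sss\in\sS_{2p}}\sigma^{2(m(\sss)-1)}$, it does not try to count $M(p,t)$ at all. Instead, it constructs a small auxiliary Wigner-type matrix $Y_r$ of size $r=\lceil\sigma^2\rceil+p$ whose off-diagonal entries are $\sqrt3$ w.p.\ $1/4$ and $-1/\sqrt3$ w.p.\ $3/4$, so that $\EE[Y_{ij}^m]\ge 1$ for every $m\ge 2$. Expanding $\EE\Tr[Y_r^{2p}]$ over the same shapes and using $r(r-1)\cdots(r-m+1)\ge r\,\sigma^{2(m-1)}$ for $m\le p+1$ gives the comparison $\sum_\sss \sigma^{2(m(\sss)-1)}\le \EE\Tr[Y_r^{2p}]/r$. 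The trace moment of $Y_r$ is then bounded by $r(2\sqrt r+C\sqrt p)^{2p}$, but that bound is \emph{imported} from Lata{\l}a--van Handel--Youssef, where it is proved via a norm bound for Wigner matrices with non-symmetrically distributed entries plus Talagrand's concentration inequality---not by shape counting. In effect, the comparison only requires the aggregate inequality $\sum_t M(p,t)r^t\le(2\sqrt r+C\sqrt p)^{2p}/r$, which is weaker than coefficient-wise control of $M(p,t)$. To salvage your route you would either have to actually prove a sharp enough shape count, or replace that step with the paper's comparison-to-$Y_r$ device and cite the known moment bound.
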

\noindent
Note that Definition~\ref{def-matrix-lift} and Theorem~\ref{thm-group} only apply to base matrices $A$ with zero diagonal entries. A base matrix with possibly non-zero diagonal entries can be handled by splitting it in its diagonal and non-diagonal parts and using triangular inequality in the result random matrices. %It is easy however to adapt the proof of Theorem~\ref{thm-group} to the case where $A$ is a symmetric base matrix with possibly non-zero diagonal entries, in that case it is possible even for the (symmetric) random matrices $\Pi_{ii}$ to be drawn from a different centered distribution. % = 0_k$ as in Definition~\ref{def-matrix-lift}, we draw $\{\Pi_{ii}\}_{1\le i\le n}$ i.i.d.\ from another centered distribution $\pi'$ on $k\times k$ symmetric matrices with spectral norm at most 1, and Theorem~\ref{thm-group} can be easily adapted to the random lift.
\begin{remark}
Upon taking $A = \{b_{ij}\},k = 1$ and $\pi = {\rm Uniform}\{\pm 1\}$, Definition~\ref{def-matrix-lift} and Theorem~\ref{thm-group} include as a special case the real symmetric random matrix $X\in \RR^{n\times n}$ with $X_{ij} = \epsilon_{ij}b_{ij}$, where $\{b_{ij}\}$ are given and $\epsilon_{ij}$ are independent Rademacher random variables for $1\le i\le j\le n$, that is, $\PP\left[\epsilon_{ij} = \pm 1\right] = 1/2$. Since~\cite{bandeira-sharp-bound} showed that the bound $O(\sigma + \sigma_*\sqrt{\log n})$ captures the optimal scaling of $\EE\|X\|$ with respect to $\sigma$ and $\sigma_* \sqrt{\log n}$ and is in general unimprovable, this implies the same for our bound~\eqref{eqn-thm-group} on $\EE \|A^{  (k,\pi)}\|$. However, Theorem~\ref{thm-group} does not directly imply the bound~\eqref{bvh-sharp-bound}, since gaussian random variables are not compactly supported. 
\end{remark}
\noindent
Besides the $k = 1$ case, Theorem~\ref{thm-group} is also interesting with natural choices such as $\pi$ being the Haar measure on the orthogonal group $O(k)$ or special orthogonal group $SO(k)$. One particular application is an estimate on the spectrum of random lifts of graphs, which we discuss below.

\subsection{Application: random lifts of graphs}
Given an undirected graph $G = (V,E)$ and an integer $k\ge 2$, the random $k$-lift of $G$, denoted $G^{(k)}$, is obtained by replacing each vertex $v\in V$ by $k$ new vertices, and each edge $e = (v_1,v_2)$ by a random $k\times k$ bipartite matching between the $k$ new vertices corresponding to $v_1$ and those corresponding to $v_2$. Here ``random'' refers to a uniform choice on all $k!$ possible bipartite matchings. We denote $A$ and $A^{ (k)}$ the adjacency matrix of $G$ and $G^{ (k)}$, respectively. 

Previous studies on the $k$-lifts of graphs, under the setting of fixed $G$ and $k\rightarrow\infty$, have revealed many properties of the resulting random graph, such as connectivity~\cite{amit-k-lifts-1}, chromatic number~\cite{amit-k-lifts-chromatic}, edge expansion~\cite{amit-k-lifts-2} and the existence of perfect matching~\cite{linial-k-lifts-matching}. The spectrum of random $k$-lifts, namely the new spectrum introduced in the lifting process
\begin{equation}
    \label{def-spec-k-lift}
    \max_{\eta\in {\rm spec}(A^{(k)})\backslash {\rm spec}(A)} |\eta| = \|A^{ (k)}-\EE A^{ (k)}\|
\end{equation}
was studied by Friedman~\cite{friedman-k-lifts-expander} via the trace method, who showed that with a random $d$-regular graph as the base graph, as $k\rightarrow\infty$,~\eqref{def-spec-k-lift} is $O(d^{3/4})$ with high probability. He also conjectured the tight bound $2\sqrt{d-1} + o(1)$. The high probability upper bound on~\eqref{def-spec-k-lift} was improved by Linial and Puder to $O(d^{2/3})$ in~\cite{linial-k-lifts-spectral}, then by Lubetzky, Sudakov and Vu~\cite{lubetzky-k-lifts-spectral} to $O(\sqrt{d}\log d)$ in the case that the second eigenvalue of the base graph is $O(\sqrt{d})$. Later, Addario-Berry and Griffiths~\cite{ag-lifts-bound} and Puder~\cite{puder-lifts-bound} proved that~\eqref{def-spec-k-lift} is $O(\sqrt{d})$, the latter giving $2\sqrt{d-1} +O(1)$ as an upper bound. Since then, various extensions or alternative proofs of the bound on~\eqref{def-spec-k-lift} of the scale $O(\sqrt{d})$ (some under slightly different settings) have been carried out with different combinatorial and probabilistic techniques, for example, in~\cite{friedman-lifts,bordenave-k-lifts-spectral-1,bordenave-k-lifts-spectral-2,agarwal-lifts-bound}. 

We should notice that the above line of work adopted the asymptotic regime $k\rightarrow\infty$ and the setting that the base graph is taken randomly over all $d$-regular graphs on $n$ vertices. In fact, in the case that the base graph is a fixed $d$-regular graph and $k\in\NN_+$ is fixed,~\eqref{def-spec-k-lift} is not always upper bounded by $O(\sqrt{d})$. As a counterexample (see~\cite{bandeira-sharp-bound}, Remark~{4.8}): consider $G$ the union of $n/s$ cliques of $s$ vertices each, with no edges between different cliques; here $s = \lceil\sqrt{\log n}\rceil$, and we assume that $n/s$ is an integer for simplicity. Seginer~\cite{seginer-bound} showed that
\[\EE \|A^{(2)}-\EE A^{(2)}\| \sim \sqrt{\log n},\]
whereas the $O(\sqrt{d})$ bound would incorrectly predict that LHS is $O(\log^{1/4}(n))$.

Another line of work considers a fixed base graph $G$ with maximum degree $\Delta$ without assuming its randomness. Making use of matrix concentration, Oliveira~\cite{oliveira-k-lifts} obtained a high probability upper bound of $O(\sqrt{\Delta \log(kn)})$ on~\eqref{def-spec-k-lift}. The most recent advancement by Bordenave and Collins~\cite{bc-eigenvalues} considered the $k$-lifts problem under a much more general framework, and proved that~\eqref{def-spec-k-lift} is $2\sqrt{d-1}+o(1)$ for any $d$-regular base graph $G$ as $k\rightarrow\infty$, finally settling Friedman's conjecture even without assuming the randomness of the base graph.

In what follows, we manage to improve the bound in~\cite{oliveira-k-lifts} by removing the multiplicative factor $\sqrt{\log(kn)}$, replacing it with an additive factor. We also improve the constant factor before $\sqrt{\Delta}$ down to $2$, in consistence with Friedman's theorem and~\cite{bc-eigenvalues}. In the large $k$ limit, our bound is weaker than~\cite{bc-eigenvalues} by an additive $\sqrt{\log(kn)}$ factor. For $k=2$, an additive $\sqrt{\log n}$ factor is needed as illustrated by a counterexample due to Seginer~\cite{seginer-bound}. However, our result does not capture the correct dependence on $k$, namely concentration arising from large $k$. Note that we are only using a slight modification of the moment method, compared to the sophisticated combinatorial technique in~\cite{bc-eigenvalues}.

\begin{theorem}\label{thm-k-lift}
Let $A$ be the adjacency matrix of $G = (V,E)$ with $|V| = n$ and ${\rm maxdeg}(G) = \Delta$, and $A^{ (k)}$ be the corresponding random $k$-lift. Then there exists a universal constant $C$, such that for any $\epsilon \in (0, 1/2]$
\begin{equation}
    \EE \|A^{ (k)}-\EE A^{ (k)}\| \le 2(1+\epsilon)\sqrt{\Delta} + \frac{C}{\sqrt{\log(1+\epsilon)}}\sqrt{\log(kn)}.
\end{equation}
\end{theorem}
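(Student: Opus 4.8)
The plan is to obtain Theorem~\ref{thm-k-lift} as a direct corollary of Theorem~\ref{thm-group}, the only work being to recognize the centered adjacency matrix of a random $k$-lift as a matrix random lift in the sense of Definition~\ref{def-matrix-lift}. Concretely, after fixing an identification of the $k$ new vertices above each $v\in V$ with $\{1,\dots,k\}$, a random $k$-lift assigns to each edge $(i,j)\in E$ with $i<j$ an independent, uniformly random $k\times k$ permutation matrix $P_{ij}$, and places $P_{ij}^\top$ on the transposed block; setting $P_{ij}$ arbitrarily (say to $0$) when $(i,j)\notin E$ --- harmless, since it is then multiplied by $A_{ij}=0$ --- one gets $A^{(k)} = \sum_{1\le i,j\le n} A_{ij}\,(E_{ij}\otimes P_{ij})$ with $P_{ji}=P_{ij}^\top$. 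Since a uniform random permutation matrix has mean $\tfrac1k J_k$, where $J_k$ is the $k\times k$ all-ones matrix, taking expectations entrywise gives $\EE A^{(k)} = A\otimes \tfrac1k J_k$, and therefore
\[ A^{(k)}-\EE A^{(k)} = \sum_{1\le i,j\le n} A_{ij}\,\bigl(E_{ij}\otimes(P_{ij}-\tfrac1k J_k)\bigr) = A^{(k,\pi)}, \]
where $A$ is the (symmetric, zero-diagonal) adjacency matrix of $G$ and $\pi$ is the law of $P-\tfrac1k J_k$ for $P$ a uniform random $k\times k$ permutation matrix.

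Next I would verify that $\pi$ satisfies the hypotheses of Theorem~\ref{thm-group}. It is centered by construction, $\EE[P-\tfrac1k J_k]=0$. It is supported on matrices of spectral norm at most $1$: every permutation matrix $P$ is orthogonal and fixes the all-ones vector $\mathbf 1$, hence maps the hyperplane $\mathbf 1^{\perp}$ isometrically onto itself, so $P-\tfrac1k J_k$ annihilates $\mathbf 1$ and acts as $P$ on $\mathbf 1^{\perp}$; consequently $\|P-\tfrac1k J_k\|=1$ whenever $k\ge 2$ (and $=0$ when $k=1$, a trivial case). Finally, the symmetrization built into Definition~\ref{def-matrix-lift}, namely $\Pi_{ji}=\Pi_{ij}^\top$, is consistent here because $J_k$ is symmetric. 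Hence Theorem~\ref{thm-group} applies verbatim to $A^{(k,\pi)}=A^{(k)}-\EE A^{(k)}$.

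It then remains to evaluate $\sigma$ and $\sigma_*$ for the adjacency matrix of $G$. Since $A_{ij}\in\{0,1\}$, we have $\sum_j A_{ij}^2 = \sum_j A_{ij} = \deg_G(i)\le \Delta$, so $\sigma = \max_i\sqrt{\sum_j A_{ij}^2}\le \sqrt{\Delta}$, while $\sigma_* = \max_{ij}|A_{ij}|\le 1$ (the statement being vacuous when $E=\varnothing$). Substituting into~\eqref{eqn-thm-group} yields
\[ \EE\bigl\|A^{(k)}-\EE A^{(k)}\bigr\| \le 2(1+\epsilon)\sqrt{\Delta} + \frac{C}{\sqrt{\log(1+\epsilon)}}\sqrt{\log(kn)}, \]
which is the claim. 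I do not expect any genuine obstacle beyond this bookkeeping: all of the difficulty has been front-loaded into Theorem~\ref{thm-group}, and the single point that requires a moment's care is the spectral-norm computation for the centered permutation matrices --- precisely the step that delivers the clean constant $2$ in front of $\sqrt{\Delta}$ and that explains the role of the hypothesis $k\ge 2$.
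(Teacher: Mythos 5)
Your proof is correct and takes exactly the same route as the paper: identify $A^{(k)}-\EE A^{(k)}$ as the matrix lift $A^{(k,\pi)}$ with $\pi={\rm Unif}\{P-\tfrac1k J_k:P\in{\rm Perm}(k)\}$, check the spectral norm and centering hypotheses, bound $\sigma\le\sqrt\Delta$ and $\sigma_*\le 1$, and invoke Theorem~\ref{thm-group}. You spell out the $\|P-\tfrac1k J_k\|\le 1$ verification a bit more explicitly than the paper, but the argument is the same.
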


\noindent
 Our bound is essentially $2(1+\epsilon)\sqrt{\Delta}$ as long as $\Delta \gg \log(kn)$, i.e. the base graph $G$ is not too sparse. The proof of Theorem~\ref{thm-k-lift} will follow from our main result, Theorem~\ref{thm-group}. 

\subsection*{Notation} In this paper, for positive quantities $A$ and $B$, $A\lesssim B$ and $A\gtrsim B$ respectively refer to $A\le CB$ and $A\ge CB$ for some absolute positive constant $C$. For $x\in \RR$, $\lceil x\rceil$ denotes the minimum integer that is larger than or equal to $x$.

\begin{comment}
\begin{theorem}\label{thm-matrix}
(General group or random matrix) Let $A$ be a symmetric $n\times n$ matrix. Suppose $\pi$ is a measure on the group $\RR^{k\times k}$ which satisfies: $\EE_\pi X = \EE_\pi X^\top = \mu$, and there exists a constant $K\in (0,+\infty)$, such that for any $p\in \ZZ_+$ and independent samples $\Pi_1,\Pi_2,\dots, \Pi_{2p}$ taken from $\pi$,
\begin{equation}
    \EE \Tr\left[(\Pi_{\alpha_1}^{(e_1)}-\mu) (\Pi_{\alpha_2}^{(e_2)}-\mu)\dots (\Pi_{\alpha_{2p}}^{(e_{2p})}-\mu)\right] \le K
\end{equation}
for any choice of $\alpha_i \in [2p]$ and $e_i\in \{\pm 1\}$, where $\Pi^{(1)}:=\Pi$ and $\Pi^{(-1)}:= \Pi^\top$ for a matrix $\Pi$. Denote $\tilde{\pi}$ the centralization of $\pi$: $\tilde{\pi}(X) = \pi(X+\mu)$. Then there exists an absolute constant $C$, such that
\begin{equation}
    \EE \|A^{  (k,\tilde{\pi})}\| \le C(\sigma + \sigma_* \sqrt{\log(Kn)}).
\end{equation}
\end{theorem}
\end{comment}

\section{Proof of main results}
\label{sec:proof}

In this section, we carry out the proof of Theorems~\ref{thm-group} and~\ref{thm-k-lift}. We begin with the following comparison argument which links  $A^{ (k,\pi)}$ to an auxiliary Wigner matrix. This argument is a modification of Proposition 2.1 in~\cite{bandeira-sharp-bound}, and the auxiliary matrix $Y_r$ is same as in the proof of Theorem 4.8 in~\cite{latala-bound-conj}.

\begin{proposition}\label{compare}
 Let $Y_r$ be the $r\times r$ symmetric matrix with zero diagonal and 
 \begin{equation*}
     Y_{ij} = \left\{
     \begin{aligned}
     \sqrt{3}, &\quad w.p.\ \frac{1}{4}\\
     -\frac{1}{\sqrt{3}},&\quad w.p.\ \frac{3}{4}
     \end{aligned}
     \right.
 \end{equation*}
 independently for all $1\le i < j\le r$. Under the setting of Theorem~\ref{thm-group}, suppose $\sigma_* \le 1$, then for every $p\in\NN_+$ there holds
 \begin{equation*}
     \EE\Tr \left[(A^{ (k,\pi)})^{2p}\right] \le \frac{kn}{\lceil\sigma^2\rceil+p} \EE\Tr\left[ Y_{\lceil\sigma^2\rceil+p}^{2p}\right].
 \end{equation*}
\end{proposition}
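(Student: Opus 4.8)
The plan is a moment-method comparison in which closed walks are grouped by combinatorial type. Set $r := \lceil\sigma^2\rceil + p$. Expanding the representation \eqref{def-matrix-lift-good} and using $(M_1\otimes N_1)(M_2\otimes N_2) = (M_1M_2)\otimes(N_1N_2)$ together with $E_{ab}E_{bc} = E_{ac}$, for each closed walk $w = (i_1,\dots,i_{2p},i_1)$ on $[n]$ one gets $(E_{i_1 i_2}\otimes\Pi_{i_1 i_2})\cdots(E_{i_{2p}i_1}\otimes\Pi_{i_{2p}i_1}) = E_{i_1 i_1}\otimes(\Pi_{i_1 i_2}\cdots\Pi_{i_{2p}i_1})$, hence
\[
\EE\Tr\big[(A^{(k,\pi)})^{2p}\big] \;=\; \sum_{w}\Big(\prod_{t=1}^{2p}A_{i_t i_{t+1}}\Big)\,\EE\Tr\big[\Pi_{i_1 i_2}\cdots\Pi_{i_{2p}i_1}\big],
\]
the sum over closed walks of length $2p$. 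Since $A$ has zero diagonal we may restrict to walks with $i_t\ne i_{t+1}$, so every factor $\Pi_{i_t i_{t+1}}$ is a $\pi$-sample or its transpose and thus has spectral norm at most $1$; consequently the trace of the $k\times k$ product is at most $k$ in absolute value, giving $|\EE\Tr[\Pi_{i_1 i_2}\cdots\Pi_{i_{2p}i_1}]|\le k$. Moreover, the matrices $\{\Pi_{ij}\}_{i<j}$ are independent and $\pi$ is centered, so if some unordered pair $\{a,b\}$ is traversed by $w$ exactly once the corresponding term vanishes (the trace is linear in the entries of $\Pi_{ab}$, each of mean zero). Writing $\mathcal{W}$ for the set of closed walks of length $2p$ on $[n]$ in which every traversed edge has multiplicity at least $2$, this yields
\[
\EE\Tr\big[(A^{(k,\pi)})^{2p}\big]\;\le\; k\sum_{w\in\mathcal{W}}\Big|\prod_{t=1}^{2p}A_{i_t i_{t+1}}\Big|.
\]

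Next I would group the walks of $\mathcal{W}$ by isomorphism type. Let $S$ range over equivalence classes under relabeling of vertices (``shapes''), with $v(S)$ distinct vertices and edge multiplicities $m_e\ge 2$; then $\sum_{w\text{ of shape }S}|\prod_t A_{i_t i_{t+1}}| = \sum_{\phi}\prod_{e=\{a,b\}}|A_{\phi(a)\phi(b)}|^{m_e} =: T_n(S)$, the sum over injections $\phi$ from the vertex set of $S$ into $[n]$. A walk in $\mathcal{W}$ uses at most $p$ distinct edges and its support graph is connected, so $v(S)\le p+1$. To estimate $T_n(S)$, fix a spanning tree $T$ of the support of $S$ (for instance the tree of first appearances along the walk, which comes with a vertex order whose every prefix is connected). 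Using $\sigma_*\le 1$ we have $|A_{\phi(e)}|^{m_e}\le|A_{\phi(e)}|^2$ for $e\in T$ and $|A_{\phi(e)}|^{m_e}\le 1$ otherwise, so $T_n(S)\le\sum_{\phi}\prod_{e\in T}|A_{\phi(e)}|^2$, where the sum may now be taken over all maps $\phi$. Summing the labels in the prefix-connected order, peeling off one leaf at a time and using $\max_i\sum_j A_{ij}^2 = \sigma^2$ at each of the $v(S)-1$ non-root vertices, this gives $T_n(S)\le n\,(\sigma^2)^{\,v(S)-1}$.

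Finally I would compare with $Y_r$. The same walk expansion gives $\EE\Tr[Y_r^{2p}] = \sum_S (r)_{v(S)}\,W(S)$, where $(r)_{v(S)} = r(r-1)\cdots(r-v(S)+1)$ counts injections into $[r]$ and $W(S) = \prod_e\EE[Y_{12}^{m_e}]$. A direct computation gives $\EE[Y_{12}] = 0$, while for every integer $m\ge 2$ one has $\EE[Y_{12}^m]\ge 1$: the even moments equal $3^{m/2}/4 + 3\cdot 3^{-m/2}/4\ge 1$, and the odd ones equal $\tfrac{\sqrt3}{4}\big(3^{(m-1)/2}-3^{-(m-1)/2}\big)\ge\tfrac{2}{\sqrt3}>1$. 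Hence $W(S)\ge 0$ always, and $W(S)\ge 1$ for every shape occurring in $\mathcal{W}$ (all multiplicities $\ge 2$). Since $v(S)-1\le p$ and $r = \lceil\sigma^2\rceil+p$, each of the factors $r-1,\dots,r-v(S)+1$ is at least $\lceil\sigma^2\rceil\ge\sigma^2$, so $\tfrac1r(r)_{v(S)} = (r-1)\cdots(r-v(S)+1)\ge(\sigma^2)^{v(S)-1}$. Combining, $T_n(S)\le n(\sigma^2)^{v(S)-1}\le\tfrac{n}{r}(r)_{v(S)}W(S)$ for every shape; summing over $S$ and multiplying by $k$ gives $\EE\Tr[(A^{(k,\pi)})^{2p}]\le\tfrac{kn}{r}\EE\Tr[Y_r^{2p}]$, as desired. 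The crux of the argument is the choice of comparison model: the law of $Y_r$ is tuned so that all moments of order $\ge 2$ are at least $1$ (forcing $W(S)\ge 1$), and the dimension $r=\lceil\sigma^2\rceil+p$ is chosen so that the falling factorial $(r)_{v(S)}/r$ dominates the factor $(\sigma^2)^{v(S)-1}$ produced by summing the $A$-weights over a spanning tree; once these two facts are in place the comparison is term-by-term over shapes and the rest is bookkeeping.
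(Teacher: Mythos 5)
Your proposal is correct and follows essentially the same route as the paper: expand the $2p$-th trace moment, use the Kronecker structure and $\|\Pi\|\le 1$ to bound $|\Tr[\prod\Pi]|$ by $k$, drop walks with a singly-traversed edge by centeredness, group by shape, bound the shape-sum by $n\sigma^{2(v(S)-1)}$, and compare term-by-term with the shape decomposition of $\EE\Tr[Y_r^{2p}]$ using $\EE[Y_{12}^m]\ge 1$ for $m\ge 2$ and the falling-factorial bound from the choice $r=\lceil\sigma^2\rceil+p$. The only stylistic difference is that you unpack the spanning-tree bound on $T_n(S)$ and the moment computation for $Y_{12}$ explicitly, where the paper cites Lemma~2.5 of~\cite{bandeira-sharp-bound} and states the moment inequality without calculation.
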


\noindent
To carry out the proof of Proposition~\ref{compare}, we start with a set of standard notations adopted from~\cite{furedi-eigenvalues} and~\cite{bandeira-sharp-bound}. Following the representation~\eqref{def-matrix-lift-good}, a direct expansion of $(A^{ (k,\pi)})^{2p}$ yields
\begin{equation}\label{expansion}
      \EE\Tr \left[(A^{ (k,\pi)})^{2p}\right] 
        = \sum_{u_1,u_2,\dots,u_{2p}\in [n]}\left(\prod_{j = 1}^{2p}A_{u_j u_{j+1}}\right) 
        \EE\Tr \left[\prod_{j = 1}^{2p} \Pi_{u_j u_{j+1}}\right].
\end{equation}
Let $G_n = ([n],E_n)$ be the complete graph on $n$ points. A \textit{cycle} $u_1\rightarrow u_2\rightarrow \dots \rightarrow u_{2p}\rightarrow u_1$ of length $2p$, where $u_i \in [n]$ for all $1\le i\le 2p$ ($u_{2p+1} := u_1$), is identified as $\su = (u_1,\dots,u_{2p})\in [n]^{2p}$. Since $\EE \left[\Pi_{ij}\right] = 0$ for any $1\le i,j\le n$, in the sum of~\eqref{expansion} we only need to consider cycles with each edge appearing at least twice. 

We call the \textit{shape} of a cycle $\su$, denoted $\sss(\su)$, a relabeling of the vertices in the order of their appearance.  For example, the shape of $\su = (4,7,2,7,9,4,5,4)$ is $(1,2,3,2,4,1,5,1)$. The following set is a collection of all cycles of shapes that contribute to the sum in~\eqref{expansion}:
\[\sS_{2p} := \{\sss(\su) : \su\ \text{is a cycle of length $2p$ with each edge appearing at least twice}\}.\]
For the sake of convenience, we also define the set of cycles with fixed shape and starting point as
\[\Gamma_{\sss,u} := \{\su\in [n]^{2p}: \sss(\su) = \sss, u_1 = u\}.\]
The \textit{span} of a shape $\sss$, denoted by $m(\sss)$, is the largest index in its representation, also the number of distinct vertices any cycle of shape $\sss$ visits. A direct observation is $m(\sss)\le p+1$ for any $\sss\in\sS_{2p}$.

\begin{proof}[Proof of Proposition~\ref{compare}]
Following the expansion~\eqref{expansion} we have
\begin{equation}\label{upbound-X}
    \begin{aligned}
     \EE\Tr \left[(A^{ (k,\pi)})^{2p}\right] 
        &= \sum_{\sss\in \sS_{2p}}\sum_{u\in [n]}\sum_{\su\in \Gamma_{\sss,u}}\left(\prod_{j = 1}^{2p}A_{u_j u_{j+1}}\right) 
        \EE\Tr \left[\prod_{j = 1}^{2p} \Pi_{u_j u_{j+1}}\right]\\
        &\le k\sum_{\sss\in \sS_{2p}}\sum_{u\in [n]}\sum_{\su\in \Gamma_{\sss,u}}\left(\prod_{j = 1}^{2p}|A_{u_j u_{j+1}}|\right) \\
        &\le kn\sum_{\sss\in \sS_{2p}}\sigma^{2(m(\sss)-1)}
    \end{aligned}
\end{equation}
where the first inequality follows from $\|\prod_{j = 1}^{2p}\Pi_{u_j u_{j+1}}\| \le 1$ and therefore $\Tr\left[\prod_{j = 1}^{2p}\Pi_{u_j u_{j+1}}\right]\le k$; and the second inequality owes to the fact that, under $\sigma_* \le 1$, for any $u\in [n]$ and $\sss\in \sS_{2p}$, Lemma 2.5 in~\cite{bandeira-sharp-bound} gives
\begin{equation*}\label{upb-prod}
    \sum_{\su\in\Gamma_{\sss,u}}\left(\prod_{j = 1}^{2p}|A_{u_j u_{j+1}}|\right)\le \sigma^{2(m(\sss)-1)}.
\end{equation*}
Meanwhile, for any positive integer $r > p$, for the auxiliary random matrix $Y_r$ we have
\begin{equation*}\label{lowbound-Y}
    \begin{aligned}
     \EE\Tr \left[Y_r^{2p}\right] 
        &= \sum_{\su\in [r]^{2p}}
        \EE\Tr \left[\prod_{j = 1}^{2p}Y_{u_j u_{j+1}}\right]\\
        &\ge \sum_{\sss\in \sS_{2p}}\sum_{u\in [r]}\sum_{\su\in \Gamma_{\sss,u}}\EE\Tr \left[\prod_{j = 1}^{2p}Y_{u_j u_{j+1}}\right] \\
        &= \sum_{\sss\in \sS_{2p}}|\{\su\in [r]^{2p}: \sss(\su) = \sss\}|\cdot \EE\Tr \left[\prod_{j = 1}^{2p}Y_{u_j u_{j+1}}\right]\\
        &\ge \sum_{\sss\in \sS_{2p}} r(r-1)\cdots (r-m(\sss)+1)\cdot 1
    \end{aligned}
\end{equation*}
The last inequality follows from the observation that $\EE [Y_{ij}^{m}]\ge 1$ for all $m\ge 2$. Now choosing $r =\lceil\sigma^2\rceil +p$, noting that $m(\sss) \le p+1$ for all $\sss\in\sS_{2p}$, we have
\begin{equation}\label{lowbound-final-Y}
    \EE\Tr \left[Y_{\lceil\sigma^2\rceil+p}^{2p}\right] \ge (\lceil\sigma^2\rceil+p)\sum_{\sss\in \sS_{2p}} \sigma^{2(m(\sss)-1)}.
\end{equation}
Comparing~\eqref{upbound-X} with~\eqref{lowbound-final-Y} yields the result.
\end{proof}
\noindent
The following lemma gives an upper bound on the moments of the auxiliary random matrix $Y_r$.
\begin{lemma}\label{moment-upbound-y}
For $Y_r$ defined in Proposition~\ref{compare}, there exists an absolute constant $C$, such that for any positive integer $p\ge 2$, there holds
\begin{equation}
    \EE\Tr \left[Y_r^{2p}\right] \le r(2\sqrt{r}+C\sqrt{p})^{2p}.
\end{equation}
\end{lemma}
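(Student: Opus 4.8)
The plan is to run the Füredi--Komlós moment method on the Wigner-type matrix $Y_r$, keeping track of \emph{every} lower-order contribution so that they reassemble into the binomial expansion of $r(2\sqrt r+C\sqrt p)^{2p}$. Expanding the trace over closed walks of length $2p$ on $[r]$ (exactly as in the proof of Proposition~\ref{compare}),
$$\EE\Tr\left[Y_r^{2p}\right]=\sum_{\su\in[r]^{2p}}\EE\left[\prod_{j=1}^{2p}Y_{u_ju_{j+1}}\right]=\sum_{\sss\in\sS_{2p}}\#\{\su\in[r]^{2p}:\sss(\su)=\sss\}\cdot W(\sss),$$
where the second equality uses that the $Y_{ij}$ are independent and centered, so a walk contributes $0$ unless every one of its edges is used at least twice --- i.e.\ unless its shape lies in $\sS_{2p}$ --- and that all walks of a fixed shape $\sss$ share the value $W(\sss):=\prod_{e}\EE\left[Y^{\mu_e(\sss)}\right]$, where $\mu_e(\sss)\ge 2$ is the multiplicity of the edge $e$ in $\sss$. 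Since $\#\{\su:\sss(\su)=\sss\}=r(r-1)\cdots(r-m(\sss)+1)\le r^{m(\sss)}$, it suffices to bound $\sum_{\sss\in\sS_{2p}}r^{m(\sss)}\,|W(\sss)|$, which I would organize by the span, writing $m(\sss)=p+1-t$ with $t\ge 0$.

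Two estimates are needed, both for a shape $\sss$ with $m(\sss)=p+1-t$; let $s$ denote the number of distinct edges of $\sss$, so that $s\ge m(\sss)-1=p-t$ (the shape is connected) and hence $p-s\le t$. \emph{Weight bound:} since $\EE[Y^2]=1$ and $|\EE[Y^\mu]|\le\EE|Y|^\mu\le 3^{\mu/2}$ for all $\mu$, and since the total excess multiplicity satisfies $\sum_e(\mu_e-2)=2p-2s$, a short count of how many edges can have $\mu_e\ge 3$ gives $|W(\sss)|\le 27^{\,p-s}\le 27^{\,t}$. \emph{Shape count:} by the classical Füredi--Komlós encoding of closed walks, in the refined nonasymptotic form used e.g.\ in~\cite{furedi-eigenvalues,bandeira-sharp-bound,latala-bound-conj}, the number of shapes of length $2p$ with span $p+1-t$ is at most $\binom{2p}{2t}(c_0 p)^{t}4^{\,p-t}$ for a universal constant $c_0$; for $t=0$ these are exactly the plane trees on $p$ edges traversed twice, numbering the Catalan number $C_p\le 4^p$, each with $W(\sss)=1$.

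Combining these with $r^{m(\sss)}=r^{p+1-t}$, the total contribution of all span-$(p+1-t)$ shapes is at most
$$r^{p+1-t}\cdot 27^{\,t}\cdot\binom{2p}{2t}(c_0 p)^{t}4^{\,p-t}=r\binom{2p}{2t}(2\sqrt r)^{2(p-t)}\left(\sqrt{27c_0\,p}\,\right)^{2t},$$
which is precisely the $j=2t$ term in $r(2\sqrt r+C\sqrt p)^{2p}=r\sum_{j=0}^{2p}\binom{2p}{j}(2\sqrt r)^{2p-j}(C\sqrt p)^{j}$ with $C:=\sqrt{27c_0}$. As every term of that expansion is nonnegative, summing over $t$ yields $\EE\Tr[Y_r^{2p}]\le r(2\sqrt r+C\sqrt p)^{2p}$, as claimed (the hypothesis $p\ge 2$ is only for convenience). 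One may equally well quote the whole statement as a special case of the nonasymptotic Füredi--Komlós moment bound for Wigner matrices with bounded entries --- here with unit variance and entries bounded by $\sqrt 3$.

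The main difficulty lies in the shape count: obtaining it in precisely the form $\binom{2p}{2t}(c_0 p)^{t}4^{\,p-t}$, rather than a cruder bound carrying spurious powers of $p$, is exactly what makes the error terms telescope into the clean quantity $2\sqrt r+C\sqrt p$. This is the heart of the Füredi--Komlós argument: one classifies the $2p$ steps of a walk of span $p+1-t$ into ``tree'' steps, ``pair-closing'' steps, and at most $O(t)$ ``defect'' steps, bounds the number of choices at each defect (its position among roughly $\binom{2p}{2t}$ slots, and $O(p)$ possible targets), and observes that the remaining steps form a Dyck-type skeleton counted by a Catalan number. Everything else is routine: the two-point computation behind the weight bound, and the fact that the $t=0$ contribution is exactly the dominant $r(2\sqrt r)^{2p}$.
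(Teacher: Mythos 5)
Your proposal takes a genuinely different route from the paper, and the route you chose has a gap at exactly the step you yourself flag as "the main difficulty."

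The paper does \emph{not} prove this lemma by a direct moment expansion. It reduces to $\EE\|Y_r\|^{2p}\le(2\sqrt r+C\sqrt p)^{2p}$ and then quotes the proof of Theorem~4.8 of~\cite{latala-bound-conj}, whose strategy is first to prove a \emph{first-moment} norm bound $\EE\|Y_r\|\le 2\sqrt r+O(1)$ for a Wigner matrix with (non-symmetrically distributed) bounded entries, and then to lift that to the $2p$-th moment via Talagrand's convex-distance concentration inequality for the Lipschitz, convex map $Y\mapsto\|Y\|$. That concentration step is what cleanly produces the additive $C\sqrt p$; no sharp combinatorial count of defective shapes is ever needed.

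Your route instead tries to make the Füredi--Komlós trace method produce the $\sqrt p$ directly, and the entire weight rests on the shape-count bound
$\#\{\sss\in\sS_{2p}:m(\sss)=p+1-t\}\le\binom{2p}{2t}(c_0p)^{t}4^{p-t}$,
which you assert as the "classical Füredi--Komlós encoding" and attribute to~\cite{furedi-eigenvalues,bandeira-sharp-bound,latala-bound-conj}. None of those references proves a bound of this form:~\cite{bandeira-sharp-bound} avoids defect-counting altogether via a Gaussian comparison, \cite{latala-bound-conj} uses the concentration route described above, and the original Füredi--Komlós encoding resolves the "doubtful step" ambiguity at a cost that is polynomial in $p$ of degree strictly larger than one per unit of span deficit --- on the order of $p^{3t}$, not $p^t$. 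Plugging $p^{3t}$ into your telescoping computation yields $r\left(2\sqrt r+Cp^{3/2}\right)^{2p}$ rather than the target $r\left(2\sqrt r+C\sqrt p\right)^{2p}$; feeding that weaker estimate into the proof of Theorem~\ref{thm-group} would give an additive error of order $\log^{3/2}(kn)$, not $\sqrt{\log(kn)}$, which is exactly the improvement the lemma is supposed to deliver. So the shape count is not a routine citation but the hard content, and as stated it is not supported by the sources you invoke. (Your weight bound $|W(\sss)|\le 27^{p-s}\le 27^t$ and the reduction to $\sum_\sss r^{m(\sss)}|W(\sss)|$ are both fine.) If you want a moment-method proof of this lemma, you would need to either supply a genuinely new encoding argument achieving $(c_0p)^t$ per defect, or follow the paper and route the moment bound through a first-moment estimate plus Talagrand concentration.
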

\noindent
\begin{proof}
Since
\[\EE\Tr \left[Y_r^{2p}\right] \le r\EE\left[\|Y_r\|^{2p}\right], \]
we only need to show that there exists an absolute constant $C$, such that for $p\ge 2$,
\begin{equation}\label{eq-reduced-Yr}
    \EE \left[\|Y_r\|^{2p}\right] \le (2\sqrt{r}+C\sqrt{p})^{2p}.
\end{equation}
The proof of~\eqref{eq-reduced-Yr} is contained in the proof of Theorem 4.8 in~\cite{latala-bound-conj}, so we do not repeat it here. The main steps of the proof are a norm bound for Wigner matrices with non-symmetrically distributed entries followed by Talagrand's concentration inequality.
\end{proof}

\begin{proof}[Proof of Theorem~\ref{thm-group}]
By Proposition~\ref{compare} and Lemma~\ref{moment-upbound-y}, assuming $\sigma_* \le 1$, we know that for any positive integer $p\ge 2$,
\begin{align*}
    \EE \|A^{ (k,\pi)}\| &\le \left(\EE \Tr \left[ (A^{ (k,\pi)})^{2p}\right] \right)^{1/2p}\\
    &\le \left(\frac{kn}{\lceil \sigma^2 \rceil+p} \EE\Tr \left[ Y_{\lceil\sigma^2\rceil+p}^{2p}\right]\right)^{1/2p}\\
    &\le (kn)^{1/2p}\left(2\sqrt{\lceil \sigma^2 \rceil+p}+C\sqrt{p}\right)
\end{align*}
If $kn\ge 3$, for $\alpha \ge 1$ choosing $p = \lceil \alpha\log(kn) \rceil \ge 2$ yields
\begin{align*}
    \EE \|A^{ (k,\pi)}\| &\le e^{1/2\alpha}\left(2\sqrt{\lceil\sigma^2\rceil+\lceil\alpha \log(kn) \rceil}+C\sqrt{\lceil\alpha \log(kn) \rceil}\right)\\
    &\le e^{1/2\alpha}\left(2\sigma+2\sqrt{\alpha\log(kn)+2}+C\sqrt{\alpha\log(kn)+1 }\right).
\end{align*}
\noindent
Denote $e^{1/2\alpha} = 1+\epsilon$. Since $n\ge 2$, $k\ge 1$, and $\epsilon\le 1/2$ implies $\alpha \ge 1$, we have $2 < 3\log 2 \le 3\alpha\log(kn)$, so
\[\EE \|A^{ (k,\pi)}\| \le 2(1+\epsilon)\sigma + (1+\epsilon)(4+2C) \sqrt{\frac{\log(kn)}{2\log(1+\epsilon)}}.\]
\noindent
The remaining case $kn < 3$ can only happen when $n = 2$ and $k = 1$. In this case $\pi$ is supported on $[-1, 1]$, and we can directly estimate
\[\EE \|A^{ (k,\pi)}\| \le \EE\|A^{ (k,\pi)}\|_F \le 2 < 2\sqrt{2\log(kn)}.\]
\end{proof}
\noindent
The spectral bound of random $k$-lifts of graphs follows as an immediate corollary.

\begin{proof}[Proof of Theorem~\ref{thm-k-lift}]
Denote ${\rm Perm}(k)$ the collection of all $k\times k$ permutation matrices, and
\[G_k := \{\Pi-\frac{1}{k}J_k : \Pi\in {\rm Perm}(k)\}\]
where $J_k$ is the $k\times k$ matrix with all entries $1$. It is easy to verify that $\|X\|\le 1$ for any $X\in G_k$. Moreover, the adjacency matrix $A$ has $\sigma^2 \le \Delta$ and $\sigma_* \le 1$ by definition. Thus it follows from Theorem~\ref{thm-group} that
\begin{align*}
    \EE \|A^{ (k)}-\EE A^{ (k)}\|&= \EE \|A^{  (k,{\rm Unif}(G_k))}\|\\
    & \le 2(1+\epsilon)\sqrt{\Delta} + \frac{C}{\sqrt{\log(1+\epsilon)}}\sqrt{\log(kn)}.
\end{align*}
\end{proof}
\begin{remark}
In the above proof, we applied Theorem~\ref{thm-group} on $\pi = {\rm Unif}(G_k)$, where $G_k$ is the centered version of ${\rm Perm}(k)$. One may expect that, under the setting of Theorem~\ref{thm-group} without assuming $\pi$ is centered, there still holds
\begin{equation}\label{thm-group-wrong}
    \EE \|A^{  (k,\pi)}-\EE A^{  (k,\pi)}\| \le C\left(\sigma + \sigma_*\sqrt{\log(kn)}\right).
\end{equation}
Though we do not have a counterexample for~\eqref{thm-group-wrong}, we must point out that~\eqref{thm-group-wrong} only follows from Theorem~\ref{thm-group} when $\|X - \EE_\pi X\| \le 1$ for every $X\in {\rm supp}(\pi)$. 
\end{remark}
We note that the proof of Theorem~\ref{thm-group} is not exploiting any potential structure of the ``lifting matrices" $\Pi_{ij}$. In fact, this may explain why Theorem~\ref{thm-k-lift} is worse than the result in~\cite{bc-eigenvalues} by an additive $\sqrt{\log(kn)}$ factor in the large $k$ limit for $d$-regular base graphs. One may be able to obtain a stronger result, for instance $\EE\|A^{(k, \pi)}-\EE A^{(k,\pi)}\| \le 2\sqrt{\Delta} + o_k(1)$, with a more careful analysis considering that $\Pi_{ij}$ are permutation matrices.

\section*{Acknowledgments}
\addcontentsline{toc}{section}{Acknowledgments}
We are grateful to Ramon van Handel for comments on an early version of the paper, in particular for pointing us to the latest results on graph $k$-lifts in~\cite{bc-eigenvalues}, for directing us to the proof of Theorem 4.8 in~\cite{latala-bound-conj} which allowed us to improve the constant factor before $\sigma$ in Theorem~\ref{thm-group} to $2$, and for making us aware of recent efforts to improve the NCK inequality under more general settings. We would also like to thank Jiedong Jiang, Eyal Lubetzky, Ruedi Suter and Joel Tropp for helpful discussions. 

\bibliographystyle{alpha}
\bibliography{main}

\end{document}